\documentclass[letterpaper, 11pt]{article}
\usepackage[numbers,sort&compress]{natbib}
\usepackage[ruled,vlined]{algorithm2e}
\usepackage{verbatim}
\usepackage{amsmath,amssymb}
\usepackage{xcolor} 
\usepackage[ruled,vlined]{algorithm2e}
\usepackage{hyperref}
\usepackage[capitalise]{cleveref}
\usepackage[T1]{fontenc}
\usepackage[utf8]{inputenc}
\usepackage[margin=1in]{geometry}
\usepackage{graphicx,booktabs,array,microtype}
\usepackage{amsthm}
\usepackage{lineno}
\usepackage{setspace}
\newtheorem{theorem}{Theorem}
\newtheorem{corollary}{Corollary}
\theoremstyle{definition}
\newtheorem{definition}{Definition}
\newtheorem{remark}{Remark}
\newcolumntype{R}{@{\extracolsep{\fill}}r}
\newcolumntype{C}{@{\extracolsep{\fill}}c}
\newcommand{\tblwidth}{\linewidth}

\begin{document}

\let\WriteBookmarks\relax

%\begin{frontmatter}

\title{Smooth periodic extensions by extrapolation and blending for FFT-based differentiation}

\author{Iv\'{a}n M\'{e}ndez-Cruz
\qquad Faisal Amlani$^*$\\[.5em]
\small Universit\'{e} Paris-Saclay, CentraleSup\'{e}lec, ENS Paris-Saclay, CNRS, \\[-.25em]
\small LMPS - Laboratoire de M\'{e}canique Paris-Saclay, Gif-sur-Yvette, France}
\date{}
% \affiliation[1]{organization={LMPS, ENS Paris-Saclay},
%                 addressline={4 Avenue des Sciences}, 
%                 city={Gif-sur-Yvette},
% %               citysep={}, % Uncomment if no comma needed between city and postcode
%                 postcode={91190},
%                 state={\^{I}le-de-France},
%                 country={France}}

\maketitle
\begin{abstract}
\noindent Non-periodic functions on closed intervals are extended into sufficiently smooth periodic functions through a continuous Hermite interpolation framework with necessary and sufficient endpoint conditions, using smooth blend-to-zero operators and short extrapolation recurrences. A matrix-free method is introduced for generating these smooth periodic extensions on the fly directly from function values at equidistant points. No precomputed continuation operator or auxiliary basis construction is required. The resulting extensions can then be used for FFT-based differentiation of non-periodic functions. Numerical tests are provided to show the effectiveness of the method, including for the highly accurate solution of a 1D linear advection equation.
\end{abstract}

%%%%%%%%%%%%%%%%%%%%%%%%%%%%%%%%%%%%%%% new section %%%%%%%%%%%%%%%%%%%%%%%%%%%%%%%%%%%%%%%%%%
\section{Introduction}\label{sec:intro}
 
Inspired by Fourier continuation (FC) methods~\cite{Boyd2002,Huybrechs2010, bruno2010, AlbinPathmanathan2014, AmlaniBruno2016, amlani2023fourier, NainwalAnand2025, NainwalAnand2026} and blending techniques~\cite{Boyd2006,Laksa2022}, this paper introduces a method for generating $C^n$-periodic extensions of non-periodic functions on closed intervals. Existing FC methods include constructions based on precomputed continuation operators and Gram-polynomial bases~\cite{bruno2010,AmlaniBruno2016}, Gram-polynomial constructions with explicit Hermite continuations~\cite{NainwalAnand2025,NainwalAnand2026}, and discrete periodic extensions that blend polynomial extrapolants by an approximate step function computed offline~\cite{AlbinPathmanathan2014}. Here, the continuation values are generated directly from equidistant samples using short extrapolation recurrences and an explicit smooth step function with exactly flat ends. No precomputed continuation matrix or Gram-polynomial construction is required. Consequently, the extension can be generated with parameters selected on the fly (e.g., at every time step of a time-dependent PDE solver). Necessary and sufficient conditions for such extensions are provided in \Cref{sec:Hermite} by (not necessarily polynomial) two-point Hermite interpolation. Based on~\cite{MendezFaisal2026}, the Hermite interpolants are generated by blend-to-zero operators in \Cref{sec:operators} using smooth step functions, while in \Cref{sec:extrapolation} the unknown values on the extended interval are generated by extrapolation based on low-degree polynomial interpolants. In \Cref{sec:method}, blending and extrapolation are combined in a method for generating $C^n$-periodic extensions at equidistant points. In \Cref{sec:differentiation}, the derivatives of non-periodic functions are approximated via fast Fourier transform (FFT)-based differentiation of $C^n$-periodic extensions and numerical experiments illustrate the use of the new method for numerical differentiation. In particular, a highly accurate numerical solution of a 1D advection equation is computed on a non-periodic domain.

%%%%%%%%%%%%%%%%%%%%%%%%%%%%%%%%%%%%%%% new section %%%%%%%%%%%%%%%%%%%%%%%%%%%%%%%%%%%%%%%%%%
\section{Construction of smooth periodic extensions using Hermite interpolation}
\label{sec:Hermite}

Real-valued non-periodic functions are extended periodically into sufficiently smooth functions.

\begin{definition}
  Let $\ell$ be a positive number, $m,n\in\mathbb{N}$ such that $n\leq m$, and $f\in C^{m}[a,b]$. 
  A periodic function $F\in C^{n}(\mathbb{R})$ with period $P=b-a+\ell$ such that $F(x)=f(x)$ for $a\leq x\leq b$ is called a {\em $C^n$-periodic extension of $f(x)$}.
\end{definition}

Necessary conditions for $C^{n}$-periodic extensions are given by Hermite interpolation conditions at the endpoints.

\begin{theorem}\label{thm1}
 Let $\ell$ be a positive number, $m,n\in\mathbb{N}$ such that $n\leq m$. 
 Suppose that $F(x)$ is a $C^{n}$-periodic extension of $f(x)$ with period $P=b-a+\ell$. 
 Define $f_{H}(x)=F(x)$ for $b\leq x\leq b+\ell$. 
 Then
  \begin{equation}\label{eq:F}
    f_{H}^{(k)}(b) = f^{(k)}(b),
    \quad f_{H}^{(k)}(b+\ell) = f^{(k)}(a),
    \quad k=0,\dots,n.
  \end{equation}
\end{theorem}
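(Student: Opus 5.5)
The plan is to use only two facts: $F$ is $C^n$ on all of $\mathbb{R}$, and $F(x+P)=F(x)$ with $P=b-a+\ell$. Each condition in \eqref{eq:F} will come from identifying one-sided derivatives of $F$ at a single point. Throughout, derivatives of $f$ at $a$ and $b$ are one-sided, as usual for $f\in C^m[a,b]$. Derivatives of $f_H$ at $b$ and $b+\ell$ are likewise one-sided: right at $b$, left at $b+\ell$.

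\emph{Step 1 (right endpoint).} Since $F\in C^n(\mathbb{R})$, each $F^{(k)}$ with $k\le n$ is continuous at $b$. Its value there is the common limit of difference quotients from either side. On $[a,b]$ we have $F=f$, so the left one-sided derivatives of $F$ at $b$ equal $f^{(k)}(b)$. On $[b,b+\ell]$ we have $F=f_H$, so the right one-sided derivatives equal $f_H^{(k)}(b)$. To be rigorous, I will argue by induction on $k$. If $F^{(k-1)}=f^{(k-1)}$ on $[a,b]$ and $F^{(k-1)}=f_H^{(k-1)}$ on $[b,b+\ell]$, then the one-sided difference quotients of $F^{(k-1)}$ at every point of these intervals coincide with those of $f^{(k-1)}$ and $f_H^{(k-1)}$ respectively. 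This gives $f_H^{(k)}(b)=F^{(k)}(b)=f^{(k)}(b)$ for $k=0,\dots,n$.

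\emph{Step 2 (periodicity transfer).} Differentiating $F(x+P)=F(x)$ gives $F^{(k)}(x+P)=F^{(k)}(x)$ for all $x$ and all $k\le n$. The chain rule applies because the shift has derivative $1$. Now take $x=a$. Since $a+P=b+\ell$, this yields $F^{(k)}(b+\ell)=F^{(k)}(a)$.

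\emph{Step 3 (left endpoint of the gap).} As in Step 1, $F^{(k)}(b+\ell)$ equals the left one-sided derivative of $f_H$ there, so it is $f_H^{(k)}(b+\ell)$. Also, $F^{(k)}(a)$ equals the right one-sided derivative of $F=f$ on $[a,b]$ at $a$, so it is $f^{(k)}(a)$. Combining these with Step 2 gives the second set of identities in \eqref{eq:F}.

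The only delicate point is the one-sided-derivative bookkeeping in Steps 1 and 3. Specifically, one must check that $F$ agreeing with $f$ on a closed interval forces its derivatives up to order $n$ to agree there, including at the endpoints. The induction above handles this, using only that $n\le m$ so that the $f^{(k)}$ involved exist. Everything else is immediate.
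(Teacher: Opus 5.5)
Your proof is correct and takes essentially the same approach as the paper: you identify $F^{(k)}$ with $f^{(k)}$ at $a,b$ and with $f_H^{(k)}$ at $b,b+\ell$, then move $F^{(k)}(a)$ to $F^{(k)}(b+\ell)$ using $P$-periodicity of $F^{(k)}$ and $a+P=b+\ell$. Your induction on one-sided derivatives just spells out what the paper asserts in one line.
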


\begin{proof}
  
Let $k\in\{0,\dots,n\}$. Since $F\in C^{n}(\mathbb{R})$ and $F$ agrees with $f$ on $[a,b]$, we have $F^{(k)}(a)=f^{(k)}(a)$ and $f_{H}^{(k)}(b)=F^{(k)}(b)=f^{(k)}(b)$. Since $F$ is $P$-periodic, $F^{(k)}$ is also $P$-periodic. Hence, using $a+P=b+\ell$, it follows that $f_{H}^{(k)}(b+\ell)=F^{(k)}(a+P)=F^{(k)}(a)=f^{(k)}(a)$. \qed

% \qed
\end{proof}

\cref{eq:F} is also a sufficient condition for generating $C^{n}$-periodic extensions.

\begin{theorem}\label{thm2}
  Let $\ell$ be a positive number, $f\in C^{n}[a,b]$ and $f_{H}\in C^{n}[b,b+\ell]$.  
  Suppose that \cref{eq:F} holds. Set
  \begin{equation}\label{eq:ftilde}
    f_{E}(x) := \begin{cases}
      f(x), & a\leq x < b;\\
      f_{H}(x), & b\leq x \leq b+\ell,
    \end{cases}  \quad\text{and}\quad
    s(x):= x - \left\lfloor\frac{(x-a)}{P}\right\rfloor P,
  \end{equation}
  and define $F(x) := f_{E}(s(x))$ for $x\in\mathbb R$, where $P=b-a+\ell$.
  Then $F(x)$ is a $C^{n}$-periodic extension of $f(x)$.
\end{theorem}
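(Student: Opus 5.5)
The plan is to check three things in turn: that $F$ is $P$-periodic, that it agrees with $f$ on $[a,b]$, and that it is $C^n$ on all of $\mathbb{R}$. The map $s$ sends every $x$ to the unique representative of $x$ modulo $P$ in $[a,a+P)=[a,b+\ell)$. Since $s(x+P)=s(x)$, periodicity of $F$ is immediate.

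For $x\in[a,b)$ we have $s(x)=x$, so $F(x)=f(x)$. At $x=b$ we get $F(b)=f_H(b)=f(b)$ by \cref{eq:F} with $k=0$. Hence $F=f$ on $[a,b]$. Note that the value $f_H(b+\ell)$ is never attained through $s$, because $s$ takes values in $[a,b+\ell)$. This causes no inconsistency, since $f_H(b+\ell)=f(a)=F(a+P)$.

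For smoothness, I would use periodicity to reduce to a single period window and examine the glued function
\[
G(x)=\begin{cases} f_H(x-P), & a-\ell\le x\le a,\\ f(x), & a\le x\le b,\\ f_H(x), & b\le x\le b+\ell.\end{cases}
\]
$G$ coincides with $F$ on $[a-\ell,b+\ell]$; the values at the junctions match by \cref{eq:F} with $k=0$. This window contains a neighbourhood of each period's worth of points, so it suffices to show $G\in C^n$ on the open interval $(a-\ell,b+\ell)$. In the interior of each piece, $G$ equals a $C^n$ function, so only the junction points $a$ and $b$ need attention.

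At each junction I would use the standard lemma that two $C^n$ functions on adjacent closed intervals glue to a $C^n$ function exactly when their one-sided derivatives of orders $0,\dots,n$ agree at the common point. I would prove this by induction on $k$. Assuming $G\in C^{k-1}$ near the point with $G^{(k-1)}$ given piecewise, the one-sided difference quotients of $G^{(k-1)}$ converge to the respective one-sided $k$-th derivatives, which are equal, so $G^{(k)}$ exists there. Its continuity then follows from the continuity of each piece's $k$-th derivative up to the endpoint.

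Applying the lemma at $b$, the condition $f_H^{(k)}(b)=f^{(k)}(b)$ is exactly the first half of \cref{eq:F}. At $a$, the left piece $x\mapsto f_H(x-P)$ has $k$-th derivative $f_H^{(k)}(b+\ell)$ at $x=a$, because $a-P=a-(b-a+\ell)$, shifted by $P$, is the point $b+\ell$. This equals $f^{(k)}(a)$ by the second half of \cref{eq:F}.

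I expect the main obstacle to be bookkeeping rather than substance. The half-open definition of $f_E$, together with the floor in $s$, makes $F$ look discontinuous at the points $a+jP$. Working with the shifted copy $G$ on a window that straddles $a$ removes this artefact, and periodicity then transfers the $C^n$ property from $(a-\ell,b+\ell)$ to all of $\mathbb{R}$.
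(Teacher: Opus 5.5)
Your proposal is correct and follows essentially the paper's route: you glue the $C^n$ pieces at $b$ and at the translates $a+jP$ using the two halves of \cref{eq:F}, and your one-sided difference-quotient lemma does the job of the paper's successive mean-value-theorem argument. One slip to fix: on $[a-\ell,a]$ the left piece of $G$ must be $f_H(x+P)$, not $f_H(x-P)$, since there $s(x)=x+P\in[b,b+\ell]$; the identity you need at $x=a$ is simply $a+P=b+\ell$.
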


\begin{proof}
The function $F$ is well defined and agrees with $f$ on $[a,b]$. Since $f\in C^{n}[a,b]$, $f_{H}\in C^{n}[b,b+\ell]$, and \cref{eq:F} gives matching derivatives at $x=b$, we have $f_{E}\in C^{n}[a,a+P]$. Away from the points $A=\{a+jP:j\in\mathbb{Z}\}$, we have $F\in C^{n}$ and $F^{(k)}(x)=f_{E}^{(k)}(s(x))$. At each point of $A$, the left and right limits of $F^{(k)}$ are $f_{E}^{(k)}(a+P)$ and $f_{E}^{(k)}(a)$, respectively, which are equal by \cref{eq:F} for $k=0,\dots,n$. The equality for $k=0$ gives continuity of $F$. Applying the mean value theorem successively to $F,F',\dots,F^{(n-1)}$ then shows that the derivatives through order $n$ exist and are continuous at each point of $A$. Hence, $F\in C^{n}(\mathbb{R})$. Finally, $s(x+P)=s(x)$, so $F(x+P)=F(x)$. \qed
\end{proof}

%%%%%%%%%%%%%%%%%%%%%%%%%%%%%%%%%%%% new section %%%%%%%%%%%%%%%%%%%%%%%%%%%%%%%%%%%%%%%%%%%%
\section{Blend-to-zero operators and smooth step functions}
\label{sec:operators}

Functions that satisfy \cref{eq:F} can be generated by operators that preserve function values and derivatives at one endpoint while setting those at the other endpoint to zero. These operators are defined in~\cite{MendezFaisal2026} along with smooth step functions and smooth transition functions. An operator $\mathcal{B}: C^{q}[a_{0},b_{0}]\to C^{q}[a_{0},b_{0}]$ is a \emph{leftward blend-to-zero operator of order} $n\leq q$ if for all $\varphi\in C^{q}[a_{0},b_{0}]$
the function $\varphi_{0}=\mathcal{B}(\varphi)$ satisfies the following:
  \begin{equation}
    \varphi_{0}^{(k)}(a_{0}) = 0, \quad 
    \varphi_{0}^{(k)}(b_{0}) = \varphi^{(k)}(b_{0}), \quad k=0,\dots,n.
  \end{equation}

Let $f,g\in C^{q}[a_{0},b_{0}]$ and $n\leq q$. A function $h_{0}\in C^{n}[a_{0},b_{0}]$ is a \emph{$C^{n}$-transition function} from $f$ to $g$ on $[a_{0},b_{0}]$ if
  \begin{equation}\label{eq:transition}
    h_{0}^{(k)}(a_{0}) = f^{(k)}(a_{0}), \quad 
    h_{0}^{(k)}(b_{0}) = g^{(k)}(b_{0}), \quad k=0,\dots,n.
  \end{equation}

 A real-valued function $\sigma\in C^{n}[a,b]$ has \emph{flat ends} of order $n$ if $\sigma^{(k)}(a)=\sigma^{(k)}(b)=0$ for $k=1,\dots,n$. A strictly increasing function $\sigma(x)$ on $[0,1]$ with flat ends  of order $n$ such that $\sigma(0)=0$ and $\sigma(1)=1$ is called a \emph{smooth step function} of order $n$~\cite{MendezFaisal2026}. 
 
Note that a smooth step function $\sigma(x)$ is symmetric about the point $(1/2,1/2)$ iff $\sigma(x)+\sigma(1-x)=1$ for $0\leq x\leq 1$. A family of smooth step functions of order $n=2m+1$ that are symmetric about the point $(1/2,1/2)$ is given by trigonometric B-functions $T_{n}(x)=1/2- 1/(2a_{m})\sum_{j=0}^{m}a_{m,j}\cos((2j+1)\pi x)$, where $a_{m,j}= (-1)^{m-j} \binom{2m+1}{m-j} /(2j+1)$ and $a_{m}=\sum_{j=0}^{m}a_{m,j}$~\cite{MendezFaisal2026,Olofsen2019}. See~\cite{Laksa2022} for other families of (symmetric) smooth step functions.

Leftward blend-to-zero operators can be generated by smooth step functions. Smooth transition functions can then be generated using these blend-to-zero operators.

\begin{theorem}[\protect{\cite[Corollary~5]{MendezFaisal2026}}]\label{thm:BlendStep}
  Let $\sigma(x)$ be a smooth step function of order $n$. 
  Set $\mathcal{B}(\varphi)=(\sigma\circ\lambda)\cdot\varphi$ for $\varphi\in C^{q}[a_{0},b_{0}]$, where $\lambda(x)=(x-a_{0})/(b_{0}-a_{0})$ for $a_{0}\leq x\leq b_{0}$ and $n\leq q$. 
  Then $\mathcal{B}$ is a linear leftward blend-to-zero operator of order $n$.
\end{theorem}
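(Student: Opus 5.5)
The plan is to verify the three defining properties of a linear leftward blend-to-zero operator directly: $\mathcal{B}$ maps $C^{q}[a_{0},b_{0}]$ into itself, it is linear, and the endpoint conditions hold for $k=0,\dots,n$. Linearity is immediate because $(\sigma\circ\lambda)$ is a fixed multiplier: $\mathcal{B}(\alpha\varphi+\beta\psi)=(\sigma\circ\lambda)(\alpha\varphi+\beta\psi)=\alpha\mathcal{B}(\varphi)+\beta\mathcal{B}(\psi)$.

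For the endpoint conditions, write $\varphi_{0}=(\sigma\circ\lambda)\varphi$ and $h=b_{0}-a_{0}$. Since $\lambda$ is affine with $\lambda'=1/h$, the chain rule gives $(\sigma\circ\lambda)^{(j)}(x)=h^{-j}\sigma^{(j)}(\lambda(x))$ for $j\le n$. Leibniz's rule then yields
\begin{equation*}
\varphi_{0}^{(k)}(x)=\sum_{j=0}^{k}\binom{k}{j}h^{-j}\sigma^{(j)}(\lambda(x))\,\varphi^{(k-j)}(x),\qquad k=0,\dots,n.
\end{equation*}
At $x=a_{0}$ we have $\lambda=0$. There $\sigma(0)=0$ kills the $j=0$ term, and the flat ends give $\sigma^{(j)}(0)=0$ for $1\le j\le n$, so $\varphi_{0}^{(k)}(a_{0})=0$. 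At $x=b_{0}$ we have $\lambda=1$. There $\sigma(1)=1$ and $\sigma^{(j)}(1)=0$ for $1\le j\le n$, so only the $j=0$ term survives and $\varphi_{0}^{(k)}(b_{0})=\varphi^{(k)}(b_{0})$.

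The main obstacle is the regularity claim that $\mathcal{B}$ maps $C^{q}$ into $C^{q}$. The smooth step function is only assumed to be in $C^{n}$, so when $n<q$ the product $(\sigma\circ\lambda)\varphi$ is a priori only $C^{n}$. I would handle this in one of two ways. The first is to read the codomain as requiring $\sigma$ to be at least $C^{q}$ on $[0,1]$, as holds for the trigonometric B-functions $T_{n}$, which are $C^{\infty}$. The second is to note that only $C^{n}$ regularity of $\varphi_{0}$ is used in the endpoint identities. I would state this caveat explicitly: under the assumption $\sigma\in C^{q}[0,1]$ the product of two $C^{q}$ functions is $C^{q}$, and the statement follows.
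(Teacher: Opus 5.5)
Your proof is correct. The linearity check plus the Leibniz expansion $\varphi_{0}^{(k)}=\sum_{j=0}^{k}\binom{k}{j}h^{-j}(\sigma^{(j)}\circ\lambda)\,\varphi^{(k-j)}$, evaluated at $\lambda=0$ and $\lambda=1$, is the natural direct verification. The paper gives no proof of its own here and simply imports the result from \cite[Corollary~5]{MendezFaisal2026}.

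Your caveat is also well taken. In this paper a smooth step function of order $n$ is only assumed to lie in $C^{n}[0,1]$, so for $q>n$ the product can fail to be $C^{q}$. This is harmless for the paper, because every use takes $q=n$: \Cref{cor:fB} states this explicitly, and \Cref{sec:method} uses $q=n=d$.
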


\begin{theorem}[\protect{\cite[Corollary~1]{MendezFaisal2026}}]\label{thm:BlendHermite}
 Let $f,g\in C^{q}[a_{0},b_{0}]$. Suppose that $\mathcal{B}: C^{q}[a_{0},b_{0}]\to C^{q}[a_{0},b_{0}]$ is a leftward blend-to-zero operator of order $n\leq q$. 
 Set $f_{0} := f-\mathcal{B}(f)$, $g_{0} := \mathcal{B}(g)$, and $h_{0}(x) := f_{0}(x)+g_{0}(x)$. 
 Then $h_{0}(x)$ is a $C^{n}$-transition function from $f(x)$ to $g(x)$ on $[a_{0},b_{0}]$.
\end{theorem}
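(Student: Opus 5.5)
The plan is to verify the endpoint conditions \cref{eq:transition} directly from the defining property of a leftward blend-to-zero operator, using only linearity of differentiation. Note that the operator $\mathcal{B}$ need not be linear for this argument: we never expand $\mathcal{B}(f+g)$; we only evaluate derivatives of $\mathcal{B}(f)$ and $\mathcal{B}(g)$ separately at the endpoints.

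First we check regularity. Since $\mathcal{B}$ maps $C^{q}[a_{0},b_{0}]$ into itself and $f,g\in C^{q}[a_{0},b_{0}]$, both $f_{0}=f-\mathcal{B}(f)$ and $g_{0}=\mathcal{B}(g)$ lie in $C^{q}[a_{0},b_{0}]$. Hence $h_{0}\in C^{q}[a_{0},b_{0}]\subseteq C^{n}[a_{0},b_{0}]$ because $n\leq q$, and all derivatives through order $n$ below are defined at the endpoints.

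Next we fix $k\in\{0,\dots,n\}$ and write $h_{0}^{(k)}=f^{(k)}-\mathcal{B}(f)^{(k)}+\mathcal{B}(g)^{(k)}$.

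At $x=a_{0}$, the blend-to-zero property applied to $\varphi=f$ and to $\varphi=g$ gives $\mathcal{B}(f)^{(k)}(a_{0})=0$ and $\mathcal{B}(g)^{(k)}(a_{0})=0$. Therefore $h_{0}^{(k)}(a_{0})=f^{(k)}(a_{0})$.

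At $x=b_{0}$, the same property gives $\mathcal{B}(f)^{(k)}(b_{0})=f^{(k)}(b_{0})$ and $\mathcal{B}(g)^{(k)}(b_{0})=g^{(k)}(b_{0})$. Therefore $h_{0}^{(k)}(b_{0})=f^{(k)}(b_{0})-f^{(k)}(b_{0})+g^{(k)}(b_{0})=g^{(k)}(b_{0})$.

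These two identities are exactly \cref{eq:transition}, so $h_{0}$ is a $C^{n}$-transition function from $f$ to $g$ on $[a_{0},b_{0}]$.

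No step is a genuine obstacle. The only points needing care are the regularity bookkeeping, which requires $n\leq q$ so that the $k$-th derivatives exist, and applying the blend-to-zero definition to each of $f$ and $g$ separately rather than assuming additivity of $\mathcal{B}$.
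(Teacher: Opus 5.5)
Your proof is correct. It evaluates $h_{0}^{(k)}=f^{(k)}-\mathcal{B}(f)^{(k)}+\mathcal{B}(g)^{(k)}$ at $a_{0}$ and $b_{0}$ using the blend-to-zero conditions (with $h_{0}\in C^{q}\subseteq C^{n}$), which is the direct verification this result rests on; the paper does not reproduce a proof but imports it from \cite[Corollary~1]{MendezFaisal2026}, and your observation that linearity of $\mathcal{B}$ is never needed is also accurate.
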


Following~\cite{NainwalAnand2026,AlbinPathmanathan2014,Boyd2006}, a smooth function $f(x)$ on $[a,b]$ can be extended on both sides and blended by smooth step functions to generate a smooth transition between $f(x)$ and its translated copy. Thus, the construction of a $C^{n}$-periodic extension can be regarded as the generation of a $C^{n}$-transition function.

\begin{corollary}\label{cor:fB}
 Let $\ell>0$, $f_{L}\in C^{n}[a-\ell,a]$, $f_{R}\in C^{n}[b,b+\ell]$, and $\sigma$ a smooth step function of order $n$. Set 
 \begin{equation}
  f_{H}(x) := f_{R}(x) - \sigma(\lambda(x)) f_{R}(x) + \sigma(\lambda(x)) f_{L}(x+a-b-\ell), \quad b\leq x\leq b+\ell,
 \end{equation} 
  where $\lambda(x):=(x-b)/\ell$. 
  Then $f_{H}^{(k)}(b) = f_{R}^{(k)}(b)$ and 
  $f_{H}^{(k)}(b+\ell) = f_{L}^{(k)}(a)$ for $k=0,\dots,n$. 
\end{corollary}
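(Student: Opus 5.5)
The plan is to recognize $f_H$ as exactly the transition function $h_0$ of \cref{thm:BlendHermite} on the interval $[a_0,b_0]=[b,b+\ell]$, and then read off the endpoint conditions. Set $g(x):=f_L(x+a-b-\ell)$ for $b\le x\le b+\ell$. This is a translate of $f_L$: the shift maps $[b,b+\ell]$ onto $[a-\ell,a]$. Hence $g\in C^n[b,b+\ell]$, and by the chain rule (the inner map has derivative $1$) we get $g^{(k)}(x)=f_L^{(k)}(x+a-b-\ell)$. In particular $g^{(k)}(b+\ell)=f_L^{(k)}(a)$.

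Next I will apply \cref{thm:BlendStep} with $a_0=b$, $b_0=b+\ell$ and $q=n$. Then $\lambda(x)=(x-b)/\ell$ matches the $\lambda$ in the statement, and $\mathcal{B}(\varphi)=(\sigma\circ\lambda)\varphi$ is a leftward blend-to-zero operator of order $n$ on $C^n[b,b+\ell]$. With $f:=f_R$ and $g$ as above, the formula for $f_H$ reads
\[
f_H=f_R-\mathcal{B}(f_R)+\mathcal{B}(g)=h_0 .
\]
By \cref{thm:BlendHermite}, $h_0$ is a $C^n$-transition function from $f_R$ to $g$. By \eqref{eq:transition} this gives $f_H^{(k)}(b)=f_R^{(k)}(b)$ and $f_H^{(k)}(b+\ell)=g^{(k)}(b+\ell)=f_L^{(k)}(a)$ for $k=0,\dots,n$.

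The only point needing care is the regularity bookkeeping: \cref{thm:BlendHermite} needs $f,g\in C^q$ with $n\le q$, and we have exactly $q=n$. As a fallback that avoids relying on the cited results, I would verify the claim directly with the Leibniz rule:
\[
f_H^{(k)}=f_R^{(k)}+\sum_{j=0}^{k}\binom{k}{j}\ell^{-j}\sigma^{(j)}(\lambda)\bigl(g^{(k-j)}-f_R^{(k-j)}\bigr).
\]
At $x=b$ we have $\lambda=0$, so $\sigma(0)=0$ and, by the flat ends, $\sigma^{(j)}(0)=0$ for $1\le j\le n$; every summand vanishes. At $x=b+\ell$ we have $\lambda=1$, so only the $j=0$ term survives, with $\sigma(1)=1$, leaving $g^{(k)}(b+\ell)$. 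I expect no real obstacle; the substantive step is simply matching the translated $f_L$ to the role of $g$.
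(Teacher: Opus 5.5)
Your proposal is correct and is essentially the paper's proof: apply \Cref{thm:BlendStep,thm:BlendHermite} on $[b,b+\ell]$ with $q=n$, $f=f_R$, and $g(x)=f_L(x+a-b-\ell)$, then use $g^{(k)}(b+\ell)=f_L^{(k)}(a)$. Your Leibniz-rule fallback is also valid, and it has the advantage of being self-contained rather than relying on the results cited from~\cite{MendezFaisal2026}.
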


\begin{proof}
Apply \Cref{thm:BlendStep,thm:BlendHermite} on $[b,b+\ell]$ with $q=n$, $f=f_{R}$, and $g(x)=f_{L}(x+a-b-\ell)$. Since $f_{H}=f_{R}-\mathcal{B}(f_{R})+\mathcal{B}(g)$ and $g^{(k)}(b+\ell)=f_{L}^{(k)}(a)$ for $k=0,\dots,n$, the result follows. \qed
\end{proof}

\begin{remark}\label{rmk:Taylor}
  Let $\sigma(x)$, $\lambda(x)$, $f_{L}(x)$, $f_{R}(x)$ and $f_{H}(x)$ be as in \Cref{cor:fB}, let $F(x)$ be as in \Cref{thm2}, and $T_{L}(x)$ and $T_{R}(x)$ be the $n$-th Taylor polynomials of $f(x)$ about $x=a$ and $x=b$, respectively. Set $f_{L}(x) = T_{L}(x)$ and $f_{R}(x) = T_{R}(x)$. From \Cref{cor:fB} and the fact that $T_{L}^{(k)}(a) = f^{(k)}(a)$ and $T_{R}^{(k)}(b) = f^{(k)}(b)$ for $k=0,\dots,n$, it is deduced that $f_{H}(x)$ satisfies \cref{eq:F}. Then, from \Cref{thm2}, it follows that $F(x)$ is a $C^{n}$-periodic extension of $f(x)$.
\end{remark}

%%%%%%%%%%%%%%%%%%%%%%%%%%%%%%%%%%%% new section %%%%%%%%%%%%%%%%%%%%%%%%%%%%%%%%%%%%%%%%%%%%
\section{Extrapolation via Lagrange polynomials}
\label{sec:extrapolation}

In the discrete setting, given $f_j=f(x_j)$ at the equidistant points $x_j=a+(j-1)\Delta x$, $j=1,\dots,N$, the problem is to generate the continuation values $F(x_{N+q})$ at $x_{N+q}=b+q\Delta x$, $q=1,\dots,C$, where $\Delta x=(b-a)/(N-1)$. The Taylor construction in \Cref{rmk:Taylor} requires higher-order endpoint derivatives, whereas in the discrete setting only the samples $f_j=f(x_j)$ are available. We therefore extrapolate $f(x)$ using interpolating polynomials.

Given positive integers $C$ and $d$ such that $d\leq N$, denote by $p_{L}(x)$ and $p_{R}(x)$ the polynomials of degree at most $d-1$ such that $p_{L}(x_{j}) = f_j$ and $p_{R}(x_{N+1-j}) = f_{N+1-j}$ for $j=1,\dots,d$. New point values at $x_{1-q} = a-q\Delta x$ and $x_{N+q} = b+q\Delta x$ are generated by taking $f_{1-q} = p_{L}(x_{1-q})$ and $f_{N+q} = p_{R}(x_{N+q})$ for $q=1,\dots,C$. Direct evaluations of $p_{L}(x)$ and $p_{R}(x)$ beyond the interpolation stencil can be sensitive to rounding errors~\cite{Webbetal2012}. Here, the extrapolated values are instead generated recursively using recurrence relations.

\begin{theorem}\label{thm:extrapolation}
Let $z_{0}<\cdots<z_{d+1}$ be equidistant points,  $g_{1},\dots,g_{d}\in\mathbb{R}$, and let $p(x)$ be the polynomial of degree at most $d-1$ such that $p(z_{j}) = g_{j}$ for $j=1,\dots,d$. 
Set $w_{j} = (-1)^{j-1}\binom{d}{j}$ for $j=1,\dots,d$. 
Then 
\begin{align}
  p(z_{0}) & = \textstyle\sum_{j=1}^{d} w_{j}g_{j} \label{eq:q0}, \\
  p(z_{d+1}) & = \textstyle\sum_{j=1}^{d} w_{j}g_{d-j+1}. \label{eq:q1}
\end{align}
\end{theorem}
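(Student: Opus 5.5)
The plan is to use the vanishing of the $d$-th forward difference of a polynomial of degree at most $d-1$ on an equidistant grid. Let $h$ be the common spacing, so $z_j=z_0+jh$. Define the forward difference operator $\Delta\phi(x)=\phi(x+h)-\phi(x)$. Each application of $\Delta$ lowers the degree of a polynomial by at least one, since the leading terms cancel. Hence $\Delta^{d}p\equiv 0$ because $\deg p\le d-1$. Writing $\Delta=E-I$ with the shift $E\phi(x)=\phi(x+h)$ and expanding binomially (the operators $E$ and $I$ commute) gives
\begin{equation*}
  0=\Delta^{d}p(z_0)=\sum_{j=0}^{d}(-1)^{d-j}\binom{d}{j}p(z_j).
\end{equation*}

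To obtain \cref{eq:q0}, isolate the $j=0$ term, whose coefficient is $(-1)^{d}$. Multiplying through by $(-1)^{d}$ gives
\begin{equation*}
  p(z_0)=-\sum_{j=1}^{d}(-1)^{j}\binom{d}{j}p(z_j)=\sum_{j=1}^{d}(-1)^{j-1}\binom{d}{j}g_j .
\end{equation*}
Here I used $(-1)^{d-j}(-1)^{d}=(-1)^{j}$ and $p(z_j)=g_j$ for $1\le j\le d$. This is exactly $\sum_j w_jg_j$.

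For \cref{eq:q1}, apply the same identity at the base point $z_1$, that is, $\Delta^{d}p(z_1)=0$. This involves $p(z_1),\dots,p(z_{d+1})$. Isolating the $p(z_{d+1})$ term, whose coefficient is $1$, gives
\begin{equation*}
  p(z_{d+1})=-\sum_{i=0}^{d-1}(-1)^{d-i}\binom{d}{i}p(z_{1+i}).
\end{equation*}
Reindex with $j=d-i$, which runs from $1$ to $d$, so that $z_{1+i}=z_{d-j+1}$. Using $\binom{d}{d-j}=\binom{d}{j}$, the coefficient becomes $-(-1)^{j}\binom{d}{j}=w_j$, which yields $\sum_j w_j g_{d-j+1}$.

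An alternative for \cref{eq:q1} is to apply \cref{eq:q0} to the reflected polynomial $\tilde p(x)=p(-x)$ on the reversed grid. The direct reindexing above is simpler.

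The only step that needs care is justifying $\Delta^{d}p\equiv0$. I would argue this by noting that $\Delta$ maps polynomials of degree $\le k$ to polynomials of degree $\le k-1$, since $(x+h)^k-x^k$ has degree $k-1$, and then inducting on $d$. Everything else is sign bookkeeping in the binomial expansion, so I expect no real obstacle beyond keeping the signs $(-1)^{d-j}$ versus $(-1)^{j-1}$ straight.
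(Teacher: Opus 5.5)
Your proposal is correct and follows the paper's proof. The paper likewise sets the $d$-th forward difference on $z_0,\dots,z_d$ to zero and solves for $p(z_0)$, then applies the same identity on $z_1,\dots,z_{d+1}$ and solves for $p(z_{d+1})$. Your sign bookkeeping and the reindexing $j=d-i$ are right, and you also justify $\Delta^{d}p\equiv 0$, which the paper simply takes as known.
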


\begin{proof}
Since $p$ has degree at most $d-1$, its $d$-th forward difference vanishes: $\sum_{j=0}^{d}(-1)^{d-j}\binom{d}{j}p(z_{j})=0$. Solving for $p(z_{0})$ and using $p(z_{j})=g_{j}$ gives \cref{eq:q0}. Applying the same to $z_{1},\dots,z_{d+1}$ and solving for $p(z_{d+1})$ gives \cref{eq:q1}. \qed
\end{proof}

  Since the extrapolated values lie on $p_{L}$ and $p_{R}$ by construction, successively applying \cref{eq:q0,eq:q1} to $p_{L}$ and $p_{R}$ on stencils shifted by one point at a time gives
\begin{equation}\label{eq:fprev}
  f_{1-q}=\textstyle\sum_{j=1}^{d}w_{j}f_{1+j-q}, \qquad
  f_{N+q}=\textstyle\sum_{j=1}^{d}w_{j}f_{N+q-j}, \qquad q=1,\dots,C.
\end{equation}
The recurrence coefficients depend only on $d$ and are independent of $\Delta x$.

%%%%%%%%%%%%%%%%%%%%%%%%%%%%%%%%%%%% new section %%%%%%%%%%%%%%%%%%%%%%%%%%%%%%%%%%%%%%%%%%%%
% \enlargethispage{1\baselineskip}
\section{An explicit extrapolation and blending method for smooth periodic extensions}
\label{sec:method}

The extrapolation and blending constructions are now combined to generate a $C^{d}$-periodic extension of an approximation of $f(x)$ on $[a,b]$. Let $f\in C^{d}[a,b]$, and let $C,d$ and $N$ be positive integers such that $2d-1\leq N$. Let $\sigma(x)$ be a smooth step function of order at least $d$. Let $x_{j}=a+(j-1)\Delta x$ and $f_{j}=f(x_{j})$ for $j=1,\dots,N$, where $\Delta x=(b-a)/(N-1)$.
Define
\begin{align}
  \phi_{L}(x) & = \left(1-\sigma\left(\frac{x-a}{x_{d}-a}\right)\right)p_{L}(x) + 
  f(x)\sigma\left(\frac{x-a}{x_{d}-a}\right),
  & \quad a\leq x\leq x_{d}, \\
  \phi_{R}(x) & = \left(1-\sigma\left(\frac{x-x_{N-d+1}}{b-x_{N-d+1}}\right)\right)f(x) + 
  p_{R}(x)\sigma\left(\frac{x-x_{N-d+1}}{b-x_{N-d+1}}\right),
  & \quad x_{N-d+1}\leq x\leq b,
\end{align}
and approximate $f(x)$ by the piecewise function
\begin{equation}
  \widetilde{f}(x) := \begin{cases}
      \phi_{L}(x), & a\leq x < x_{d}; \\
      f(x), & x_{d}\leq x\leq x_{N-d+1}; \\
      \phi_{R}(x), & x_{N-d+1}< x\leq b.
  \end{cases}
\end{equation}
From \cref{thm:BlendStep,thm:BlendHermite} it follows that $\phi_{L}(x)$ is a $C^{d}$-transition function from $p_{L}(x)$ to $f(x)$ on $[a,x_{d}]$ and that $\phi_{R}(x)$ is a $C^{d}$-transition function from $f(x)$ to $p_{R}(x)$ on $[x_{N-d+1},b]$. Hence, $\widetilde{f}\in C^{d}[a,b]$, with $\widetilde{f}^{(k)}(a)=p_{L}^{(k)}(a)$ and $\widetilde{f}^{(k)}(b)=p_{R}^{(k)}(b)$ for $k=0,\dots,d$. Furthermore, $\widetilde{f}(x_{j})=f_{j}$ for $j=1,\dots,N$.

Set $\ell=C\Delta x$. Applying \Cref{cor:fB} with $n=d$, $f_{L}=p_{L}$, and $f_{R}=p_{R}$, we have
\begin{equation}\label{eq:fH2}
  f_{H}(x) = \left(1 - \sigma\left((x-b)/\ell\right)\right) p_{R}(x) + 
  \sigma\left((x-b)/\ell\right) p_{L}(x+a-b-\ell).
\end{equation}
Noting that $(x_{N+q}-b)/\ell = q/C$ and $x_{N+q}+a-b-\ell=x_{1-C+q}$ and recalling that $f_{1-C+q} = p_{L}(x_{1-C+q})$ and $f_{N+q} = p_{R}(x_{N+q})$ it follows from \Cref{eq:fH2} that
\begin{equation}\label{eq:fH3}
  f_{H}(x_{N+q}) = \left(1-\sigma(q/C)\right) f_{N+q} + \sigma(q/C) f_{1-C+q},
  \quad q=0,\dots,C.
\end{equation}
In addition, if $\sigma(x)$ is symmetric about the point $(1/2,1/2)$, then from \Cref{eq:fH3} it is deduced that
\begin{equation}\label{eq:pisym} 
  f_{H}(x_{N+q}) = \sigma(1-q/C)f_{N+q} + \sigma(q/C)f_{1-C+q}, \quad q=0,\dots,C.
\end{equation}

Let $\widetilde{F}(x) :=\widetilde{f}_{E}(s(x))$ for $x\in\mathbb{R}$, where $\widetilde{f}_{E}(x) := \widetilde{f}(x)$ on $a\leq x\leq b$ and $\widetilde{f}_{E}(x) := f_{H}(x)$ on $b< x \leq b+\ell$. From \Cref{thm2} it follows that $\widetilde{F}(x)$ is a $C^{d}$-periodic extension of $\widetilde{f}(x)$ with period $P=b-a+C\Delta x=(N+C-1)\Delta x$.

Combining \cref{eq:fprev,eq:pisym} generates the continuation values $f_{H}(x_{N+q})$, $q=1,\dots,C$, on the fly directly from the equidistant samples $f_{j}$, $j=1,\dots,N$. This \emph{Extrapolation \& Blending} (EB) construction is summarized in \Cref{alg:extrablend}.

\begin{algorithm}[!htbp]
  \small
  \caption{Extrapolation \& Blending (EB)}
  \label{alg:extrablend}
  \DontPrintSemicolon 
    \KwIn{$C,d,N\in\mathbb{N}$ with $2d-1\leq N$; a symmetric smooth step function $\sigma(x)$ of order at least $d$; $f_{j}$ for $j=1,\dots,N$}
  \KwOut{$f_{H}(x_{N+q})$  given by \cref{eq:pisym} for $q=1,\dots,C$}
  $\sigma_{q} \gets \sigma(q/C)$ for $q=1,\dots,C-1$\;
  $w_{j} \gets (-1)^{j-1}\binom{d}{j}$ for $j=1,\dots,d$\; 
  $f_{q}^{H} \gets 0$ for $q=1,\dots,C$\;
  \For{$q=1$ to $C-1$}{
    $f_{1-q} \gets \sum_{j=1}^{d} w_{j}f_{j+1-q}$\;
    $f_{N+q} \gets \sum_{j=1}^{d} w_{j}f_{N+q-j}$\;
    $f_{q}^{H} \gets f_{q}^{H} + \sigma_{C-q}f_{N+q}$\;
	  $f_{C-q}^{H} \gets f_{C-q}^{H} + \sigma_{C-q}f_{1-q}$\;
  }
  $f_{C}^{H} \gets f_{1}$
\end{algorithm}

\begin{remark}
With the same degree-$(d-1)$ boundary interpolants and $C$ continuation values, EB reproduces the discrete extension of~\cite{AlbinPathmanathan2014} in exact arithmetic when $\sigma(q/C)$ equals their sampled approximate step values. Unlike~\cite{AlbinPathmanathan2014}, EB uses an explicit smooth step function and requires neither an offline approximate step function nor a continuation operator.
\end{remark}

%%%%%%%%%%%%%%%%%%%%%%%%%%%%%%%%%%%% new section %%%%%%%%%%%%%%%%%%%%%%%%%%%%%%%%%%%%%%%%%
\section{Numerical experiments with FFT-based differentiation}
\label{sec:differentiation}

Consider the problem of numerical differentiation of the real-valued function $f(x)$ on $[a,b]$ tabulated at the equidistant points $x_{j}=a+(j-1)\Delta x$ for $j=1,\dots,N$, where $\Delta x=(b-a)/(N-1)$. Let $C$ be a positive integer. Suppose that $\widetilde{F}(x)$ is a $C^{d}$-periodic extension of $\widetilde{f}(x)$ with period $P=b-a+\ell$, where $\ell=C\Delta x$. The derivatives of $f(x)$ are approximated by spectral differentiation of $\widetilde{F}(x)$.

Denote by $\mathrm{i}$ the imaginary unit, set $M=N+C-1$ and let $\mathbf{f}^E=(f_1^E,\dots,f_M^E)^T$,
 where $f_j^E=f_j$ for $j=1,\dots,N$ and $f_j^E=f_H(x_j)$ for $j=N+1,\dots,M$. The $k$-th derivative is obtained by a real FFT (RFFT) of $\mathbf{f}^{E}$, multiplication of Fourier mode $j$ by $(2\pi\mathrm{i}j/P)^{k}$, and an inverse real FFT (IRFFT)~\cite{Dahlquist2008}. A spectral filter $\varsigma(x)$ can be applied by replacing $\widehat{f}_{j}$ by $\varsigma(2j/M)\widehat{f}_{j}$ \cite[Section~8.1]{Canutoetal2007}.

The two following tests illustrate FFT-based differentiation using the values $f_{j}^{E}$ generated by \Cref{alg:extrablend}.

\textbf{\em{Test 1.}} The derivatives of $f(x)=\cos((1+x)^{2})$ are approximated at equidistant points of $[0,1]$ by FFT-based differentiation using EB with $C=64$, $d=9$ and $\sigma(x)=T_{11}(x)$ in \cref{alg:extrablend}. The approximate $C^{d}$-periodic extension of $f(x)$ is shown in \cref{fig:test1}. \Cref{tab:highderiv} compares the $L^{\infty}$ errors for the first, second, and fifth derivatives with FC-Gram~\cite{bruno2010,AmlaniBruno2016} ($C=25$, $d=9$) and FOD/NOD~\cite{Egidi2023}: EB is at least as accurate as FC-Gram, and more accurate beyond the coarsest resolution, while generally outperforming FOD/NOD before roundoff effects become visible.

\begin{figure}[!ht]
	\centering
	\includegraphics[width=.35\textwidth]{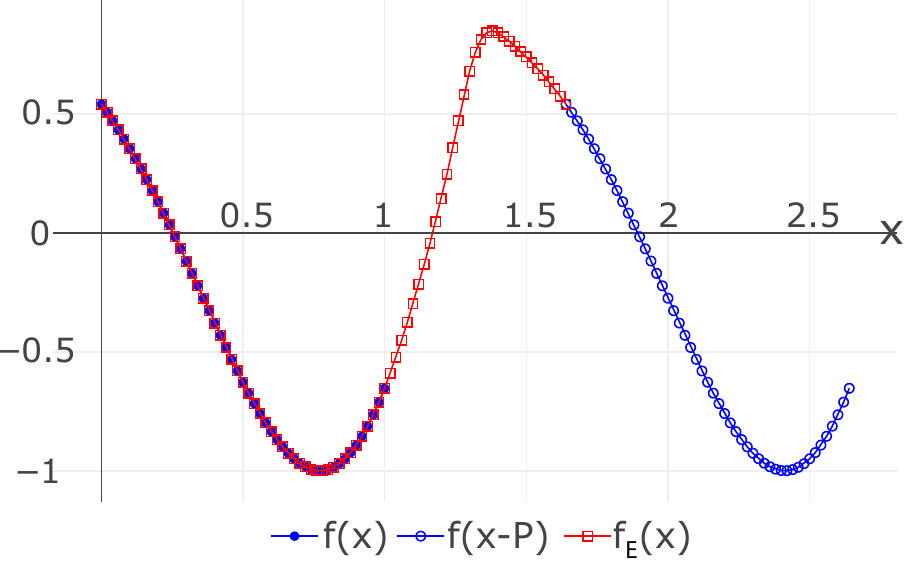}
  \caption{Construction of a $C^{9}$-periodic extension of $f(x)=\cos((1+x)^2)$, with period $P=1.64$.}
  \label{fig:test1}
\end{figure}

\begin{table}[!htbp]
\small
\centering
\caption{$L^{\infty}$ errors for FFT-based differentiation of $f(x)=\cos((1+x)^{2})$. FOD/NOD values are reproduced directly from~\cite{Egidi2023}.}
\begin{tabular*}{\tblwidth}{@{}C|RRR|RRR|RRR@{}}
 \toprule
 &  & $f'(x)$ & & & $f''(x)$ & & & $f^{(5)}(x)$ & \\
 $1/\Delta x$ & EB & FC-Gram & FOD~\cite{Egidi2023} & EB & FC-Gram & NOD~\cite{Egidi2023} & EB & FC-Gram & NOD~\cite{Egidi2023} \\
 \midrule
 25 & 1.07(-7) & 1.07(-7) & 1.07(-5) & 3.24(-6) & 3.24(-6) 
 & 6.04(-5) & 1.09(-2) & 1.09(-2) & 1.90(-2) \\
 50 & 7.96(-10) & 1.06(-9) & 6.69(-7) & 4.77(-8) & 4.82(-8) 
 & 6.69(-6) & 1.17(-3) & 1.71(-3) & 1.52(-3) \\
 100 & 3.69(-12) & 7.57(-10) & 4.18(-8) & 4.59(-10) & 2.89(-9) 
 & 5.15(-7) & 8.45(-5) & 2.48(-2) & 3.63(-4) \\
 200 & 4.85(-13) & 2.71(-9) & 2.62(-9) & 9.39(-11) & 2.24(-8) 
 & 3.52(-8) & 1.40(-4) & 1.41(+0) & 8.61(-3) \\
 400 & 1.55(-12) & 4.05(-9) & 1.64(-10) & 4.94(-10) & 6.80(-8)
 & 2.42(-9) & 6.84(-3) & 3.35(+1) & 1.95(-1) \\ 
 800 & 5.42(-12) & 6.48(-9) & 1.07(-11) & 5.13(-9) & 2.10(-7) 
 & 1.69(-9) & 1.79(-1) & 8.62(+3) & 8.58(+0) \\
 1600 & 5.62(-12) & 1.42(-8) & 3.05(-12) & 7.29(-9) & 1.25(-6) 
 &  7.98(-9) & 5.83(+0) & 2.97(+4) & 3.25(+2) \\
 \bottomrule
\end{tabular*}
\label{tab:highderiv}
\end{table}
% \enlargethispage{2\baselineskip}
\textbf{\em{Test 2.}} Consider the following 1D linear advection problem:
\begin{equation}\label{eq:1Dadvection}
  \begin{cases}
 \dfrac{\partial u}{\partial t} + c\dfrac{\partial u}{\partial x} = 
 \left(\dfrac{c}{1.15-x} - 1\right)\phi(x,t), \quad & 0\leq x\leq 1, \quad t\geq 0,\\[.75em]
 	u(0,t)  = -\sin(4\pi ct) + \phi(0,t), \quad & t\geq 0,\\
  u(x,0)  = \sin(4\pi x) + \phi(x,0), \quad & 0\leq x\leq 1,
  \end{cases}
\end{equation}
% subject to the following conditions
% \begin{align}
% 	u(0,t) & = -\sin(4\pi c) + \phi(0,t), 
%   u(x,0) & = \sin(4\pi x) + \phi(x,0), \label{eq:ic}
% \end{align}
where $\phi(x,t)=e^{-t}/(1.15-x)$ and $c>0$. For $c=1$, the problem is integrated to $t=1$ using the fourth-order Adams-Bashforth method: $u_{k} = u_{k-1} +  (55G_{k-1} - 59G_{k-2} + 37G_{k-3} -9G_{k-4})\Delta t/24$, where $G_{k}=-c\frac{\partial u_{k}}{\partial x} + \left(\frac{c}{1.15-x} - 1\right)\phi(x,t_k)$, $u_{k} = u(x,t_{k})$, $t_{k}=k\Delta t$, and $\Delta t=0.1\Delta x/c$ based on the CFL condition~\cite[Appendix~D.2.2]{Canutoetal2007}. The exact solution $u(x,t)=\sin(4\pi(x-ct))+\phi(x,t)$ initializes $u_0,\dots,u_3$. At the $k$-th step, periodic extensions of $u_{k-1}$ are generated by EB taking $d=5$, $C\in\{25,64\}$ and $\sigma(x)=T_{11}(x)$. The spatial derivatives required to evaluate $G_{k-1}$ are computed by FFT-based differentiation of the extensions using the filter $\varsigma(x)=\exp(-\alpha x^{\rho})$ with $\alpha=2\log(10)$ and $\rho=16$. Then $u_{k}$ is updated using the filtered solution $u_{k-1}$ reconstructed by IRFFT.

Let $e_{k}$ be the $L^{\infty}$ error up to $t=1$ for $N_{k}+1$ points, where $N_{k}=2^{4+k}$ and $\Delta x=1/N_{k}$ for $k=1,\dots,8$. For $k=2,\dots,8$, define $O_{k}=\log(e_{k-1}/e_{k})/\log(N_{k}/N_{k-1})$. Convergence rates for the solutions generated by EB and FC-Gram ($C=25, d=5$) are shown in \Cref{tab:1Dadvsol}. For EB with $C=64$, the observed convergence rates remain between approximately $4.6$ and $4.9$ before the error floor is reached.

\begin{table}[!htbp]
\small  
\centering
\caption{Maximum ($L^\infty$) error 
over all space and time of the numerical solution of the system given by
\cref{eq:1Dadvection}.}
\begin{tabular*}{\tblwidth}{@{}R|RRRRRR@{}}
  \toprule
$1/\Delta x$ & $e_{k}$ (EB), $C=64$ & $O_{k}$ (EB), $C=64$
& $e_{k}$ (EB), $C=25$ & $O_{k}$ (EB), $C=25$
& $e_{k}$ (FC-Gram) & $O_{k}$ (FC-Gram) \\
  \midrule
  32 & 1.11(-3) & - & 1.1(-3) & - &  1.11(-3) & - \\
  64 & 3.71(-5) & 4.90 & 3.55(-5) & 4.96 & 3.70(-5) & 4.90 \\
  128 & 1.51(-6) & 4.62 & 1.51(-6) & 4.55 & 1.51(-6) & 4.61 \\
  256 & 5.91(-8) & 4.67 & 1.07(-7) & 3.81 & 5.94(-8) & 4.67 \\
  512 & 2.08(-9) & 4.83 & 8.15(-8) & 3.95(-1) & 2.29(-9) & 4.69 \\
  1024 & 6.93(-11) & 4.91 & 6.57(-8) & 3.11(-1) & 1.44(-9) & 6.65(-1) \\
  2048 & 2.62(-12) & 4.72 & 5.87(-8) & 1.68(-1) & 1.44(-9) & 6.18(-3) \\
  4096 & 2.08(-12) & 3.36(-1) & 5.56(-8) & 7.97(-2) & 1.44(-9) & 6.54(-4) \\
  \bottomrule
\end{tabular*}
\label{tab:1Dadvsol}
\end{table}

%%%%%%%%%%%%%%%%%%%%%%%%%%%%%%%%%%%% new section %%%%%%%%%%%%%%%%%%%%%%%%%%%%%%%%%%%%%%%%%%%%
\section{Conclusions}
\label{sec:conclusions}

EB is a matrix-free method for generating approximate smooth periodic extensions from equidistant samples on the fly, providing an alternative to precomputed Fourier continuation constructions for FFT-based differentiation of non-periodic functions. Further work will address error bounds and the selection of the smooth step function and continuation parameters.

\subsection*{Acknowledgments}

This work was supported by the Agence Nationale de la Recherche (ANR) under Grant ANR-23-CE46-0008.

\subsection*{Data availability}

A companion Pluto notebook implemented in Julia is available at \href{https://pluto.land/n/ptc1yrux}{https://pluto.land/n/ptc1yrux}.

%% Loading bibliography style file
%\bibliographystyle{model1-num-names}

\bibliographystyle{elsarticle-num-names}

% Loading bibliography database
\bibliography{references}

\begin{thebibliography}{16}
\expandafter\ifx\csname natexlab\endcsname\relax\def\natexlab#1{#1}\fi
\providecommand{\url}[1]{\texttt{#1}}
\providecommand{\href}[2]{#2}
\providecommand{\path}[1]{#1}
\providecommand{\DOIprefix}{doi:}
\providecommand{\ArXivprefix}{arXiv:}
\providecommand{\URLprefix}{URL: }
\providecommand{\Pubmedprefix}{pmid:}
\providecommand{\doi}[1]{\href{http://dx.doi.org/#1}{\path{#1}}}
\providecommand{\Pubmed}[1]{\href{pmid:#1}{\path{#1}}}
\providecommand{\bibinfo}[2]{#2}
\ifx\xfnm\relax \def\xfnm[#1]{\unskip,\space#1}\fi
%Type = Article
\bibitem[{Boyd(2002)}]{Boyd2002}
\bibinfo{author}{J.~P. Boyd},
\newblock \bibinfo{title}{A comparison of numerical algorithms for {Fourier}
  extension of the first, second, and third kinds},
\newblock \bibinfo{journal}{J. Comput. Phys.} \bibinfo{volume}{178}
  (\bibinfo{year}{2002}) \bibinfo{pages}{118--160}.
%Type = Article
\bibitem[{Huybrechs(2010)}]{Huybrechs2010}
\bibinfo{author}{D.~Huybrechs},
\newblock \bibinfo{title}{On the {Fourier} extension of nonperiodic functions},
\newblock \bibinfo{journal}{SIAM J. Numer. Anal.} \bibinfo{volume}{47}
  (\bibinfo{year}{2010}) \bibinfo{pages}{4326--4355}.
%Type = Article
\bibitem[{Bruno and Lyon(2010)}]{bruno2010}
\bibinfo{author}{O.~P. Bruno}, \bibinfo{author}{M.~Lyon},
\newblock \bibinfo{title}{High-order unconditionally stable {FC-AD} solvers for
  general smooth domains {I}. {Basic} elements},
\newblock \bibinfo{journal}{J. Comput. Phys.} \bibinfo{volume}{229}
  (\bibinfo{year}{2010}) \bibinfo{pages}{2009--2033}.
%Type = Article
\bibitem[{Albin and Pathmanathan(2014)}]{AlbinPathmanathan2014}
\bibinfo{author}{N.~Albin}, \bibinfo{author}{S.~Pathmanathan},
\newblock \bibinfo{title}{Discrete periodic extension using an approximate step
  function},
\newblock \bibinfo{journal}{SIAM J. Sci. Comput.} \bibinfo{volume}{36}
  (\bibinfo{year}{2014}) \bibinfo{pages}{A668--A692}.
%Type = Article
\bibitem[{Amlani and Bruno(2016)}]{AmlaniBruno2016}
\bibinfo{author}{F.~Amlani}, \bibinfo{author}{O.~P. Bruno},
\newblock \bibinfo{title}{An {FC}-based spectral solver for elastodynamic
  problems in general three-dimensional domains},
\newblock \bibinfo{journal}{J. Comput. Phys.} \bibinfo{volume}{307}
  (\bibinfo{year}{2016}) \bibinfo{pages}{333--354}.
%Type = Article
\bibitem[{Amlani et~al.(2023)Amlani, Wei, and Pahlevan}]{amlani2023fourier}
\bibinfo{author}{F.~Amlani}, \bibinfo{author}{H.~Wei}, \bibinfo{author}{N.~M.
  Pahlevan},
\newblock \bibinfo{title}{A {Fourier}-based methodology without numerical
  diffusion for conducting dye simulations and particle residence time
  calculations},
\newblock \bibinfo{journal}{J. Comput. Phys.} \bibinfo{volume}{493}
  (\bibinfo{year}{2023}) \bibinfo{pages}{112472}.
%Type = Article
\bibitem[{Nainwal and Anand(2025)}]{NainwalAnand2025}
\bibinfo{author}{P.~Nainwal}, \bibinfo{author}{A.~Anand},
\newblock \bibinfo{title}{A modified {FC-Gram} approximation algorithm with
  provable error bounds},
\newblock \bibinfo{journal}{J. Sci. Comput.} \bibinfo{volume}{105}
  (\bibinfo{year}{2025}).
%Type = Article
\bibitem[{Nainwal and Anand(2026)}]{NainwalAnand2026}
\bibinfo{author}{P.~Nainwal}, \bibinfo{author}{A.~Anand},
\newblock \bibinfo{title}{A generalized {FC-Gram} approximation framework with
  analysis and applications},
\newblock \bibinfo{journal}{arXiv preprint arXiv:2605.04765}
  (\bibinfo{year}{2026}).
%Type = Article
\bibitem[{Boyd(2006)}]{Boyd2006}
\bibinfo{author}{J.~P. Boyd},
\newblock \bibinfo{title}{Asymptotic {Fourier} coefficients for a ${C^\infty}$
  bell (smoothed-“top-hat”) \& the {Fourier} extension},
\newblock \bibinfo{journal}{J. Sci. Comput.} \bibinfo{volume}{29}
  (\bibinfo{year}{2006}) \bibinfo{pages}{1--24}.
%Type = Book
\bibitem[{Laks{\aa}(2022)}]{Laksa2022}
\bibinfo{author}{A.~Laks{\aa}}, \bibinfo{title}{Blending techniques in Curve
  and Surface construction}, \bibinfo{publisher}{GEOFO}, \bibinfo{year}{2022}.
%Type = Article
\bibitem[{M\'{e}ndez-Cruz and Amlani(2026)}]{MendezFaisal2026}
\bibinfo{author}{I.~M\'{e}ndez-Cruz}, \bibinfo{author}{F.~Amlani},
\newblock \bibinfo{title}{Blend-to-zero operators for smooth transition
  functions},
\newblock \bibinfo{journal}{arXiv preprint arXiv:2604.26440}
  (\bibinfo{year}{2026}).
%Type = Article
\bibitem[{Olofsen(2019)}]{Olofsen2019}
\bibinfo{author}{H.~Olofsen},
\newblock \bibinfo{title}{Blending functions based on trigonometric and
  polynomial approximations of the {Fabius} function},
\newblock \bibinfo{journal}{Norsk Informatikkonferanse} \bibinfo{volume}{2019}
  (\bibinfo{year}{2019}) \bibinfo{pages}{12s}. \URLprefix
  \url{https://hdl.handle.net/10037/17358}.
%Type = Article
\bibitem[{Webb et~al.(2012)Webb, Trefethen, and Gonnet}]{Webbetal2012}
\bibinfo{author}{M.~Webb}, \bibinfo{author}{L.~N. Trefethen},
  \bibinfo{author}{P.~Gonnet},
\newblock \bibinfo{title}{Stability of barycentric interpolation formulas for
  extrapolation},
\newblock \bibinfo{journal}{SIAM J. Sci. Comput.} \bibinfo{volume}{34}
  (\bibinfo{year}{2012}) \bibinfo{pages}{A3009--A3015}.
%Type = Book
\bibitem[{Dahlquist and Bj\"{o}rck(2008)}]{Dahlquist2008}
\bibinfo{author}{G.~Dahlquist}, \bibinfo{author}{A.~Bj\"{o}rck},
  \bibinfo{title}{Numerical Methods in Scientific Computing, Volume I},
  \bibinfo{publisher}{SIAM}, \bibinfo{year}{2008}.
%Type = Book
\bibitem[{Canuto et~al.(2007)Canuto, Quarteroni, Hussaini, and
  Zang}]{Canutoetal2007}
\bibinfo{author}{C.~Canuto}, \bibinfo{author}{A.~Quarteroni},
  \bibinfo{author}{M.~Y. Hussaini}, \bibinfo{author}{T.~A. Zang},
  \bibinfo{title}{Spectral Methods: Evolution to Complex Geometries and
  Applications to Fluid Dynamics}, \bibinfo{publisher}{Springer-Verlag},
  \bibinfo{year}{2007}.
%Type = Article
\bibitem[{Egidi et~al.(2023)Egidi, Giacomini, Maponi, and Youssef}]{Egidi2023}
\bibinfo{author}{N.~Egidi}, \bibinfo{author}{J.~Giacomini},
  \bibinfo{author}{P.~Maponi}, \bibinfo{author}{M.~Youssef},
\newblock \bibinfo{title}{An {FFT} method for the numerical differentiation},
\newblock \bibinfo{journal}{Appl. Math. Comput.} \bibinfo{volume}{445}
  (\bibinfo{year}{2023}) \bibinfo{pages}{127856}.

\end{thebibliography}

%\vskip3pt

\end{document}